\theoremstyle{definition}
\theoremstyle{remark}
\begin{document}

\title[Elementary logarithmic family]{The 
integrals in Gradshteyn and Rhyzik. Part 2: \\
Elementary logarithmic integrals}

%    Information for second author
\author{Victor H. Moll}
\address{Department of Mathematics,
Tulane University, New Orleans, LA 70118}
\email{vhm@math.tulane.edu}

%    General info
\subjclass{Primary 33}

\date{\today}

\keywords{Integrals}

\begin{abstract}
We describe methods to evaluate elementary logarithmic integrals. The integrand
is the product of a rational function and a linear polynomial in $\ln x$. 
\end{abstract}

\maketitle

\newcommand{\nn}{\nonumber}
\newcommand{\ba}{\begin{eqnarray}}
\newcommand{\ea}{\end{eqnarray}}
\newcommand{\ift}{\int_{0}^{\infty}}
\newcommand{\ione}{\int_{0}^{1}}
\newcommand{\ifft}{\int_{- \infty}^{\infty}}
\newcommand{\no}{\noindent}
\newcommand{\Ftwo}{{{_{2}F_{1}}}}
\newcommand{\realpart}{\mathop{\rm Re}\nolimits}
\newcommand{\imagpart}{\mathop{\rm Im}\nolimits}

\newtheorem{Definition}{\bf Definition}[section]
\newtheorem{Thm}[Definition]{\bf Theorem} 
\newtheorem{Example}[Definition]{\bf Example} 
\newtheorem{Lem}[Definition]{\bf Lemma} 
\newtheorem{Note}[Definition]{\bf Note} 
\newtheorem{Cor}[Definition]{\bf Corollary} 
\newtheorem{Prop}[Definition]{\bf Proposition} 
\newtheorem{Problem}[Definition]{\bf Problem} 
\numberwithin{equation}{section}

\section{Introduction} \label{intro} 
\setcounter{equation}{0}

The table of integrals by I. M. Gradshteyn and I. M. Rhyzik \cite{gr} 
contains a large selection of definite integrals of the form
\begin{equation}
\int_{a}^{b} R(x) \ln^{m}x \, dx, 
\label{basic}
\end{equation}
\noindent
where $R(x)$ is a rational function, $a, \, b \in \mathbb{R}^{+}$ 
 and $m \in \mathbb{N}$. We call integrals 
of the form (\ref{basic}) {\em elementary logarithmic integrals}.
The goal of this
note is to present methods to evaluate them. We may assume that $a=0$ using 
\begin{equation}
\int_{a}^{b} R(x) \ln^{m}x \, dx = \int_{0}^{b} R(x) \, \ln^{m}x \, dx - 
\int_{0}^{a} R(x) \, \ln^{m}x \, dx. 
\end{equation}

Section \ref{sec-poly} describes the situation when $R$ is a polynomial. 
Section \ref{sec-linear} presents the case in which the rational function 
has a single simple pole. Finally section \ref{sec-multiple} considers the
case of multiple poles.

\section{Polynomials examples} \label{sec-poly} 
\setcounter{equation}{0}

The first example considered here is
\begin{equation}
I(P;b,m) := \int_{0}^{b} P(x) \, \ln^{m}x \, dx,
\end{equation}
\noindent
where $P$ is a polynomial. This can 
be evaluated in elementary terms. Indeed, 
$I(P;b,m)$ is a linear combination of 
\begin{equation}
\int_{0}^{b} x^{j}  \, \ln^{m}x \, dx, 
\end{equation}
\noindent
and the change of variables $x = bt$ yields
\begin{equation}
\int_{0}^{b} x^{j} \ln^{m}x \, dx = 
b^{j+1} \sum_{k=0}^{m} \binom{m}{k} \ln^{m-k}b \int_{0}^{1} t^{j} \ln^{k}t \, 
dt. 
\end{equation}
\noindent
The last integral evaluates to  $(-1)^{k}k!/(j+1)^{k+1}$ either an easy 
induction argument or by the change of variables $t = e^{-s}$ that gives
it as a value of the gamma function. 

\begin{Thm}
Let $P(x)$ be a polynomial given by 
\begin{equation}
P(x) = \sum_{j=0}^{p}a_{j}x^{j}. 
\end{equation}
\noindent
Then
\begin{equation}
I(P;b,m) := \int_{0}^{b} P(x) \ln^{m}x \, dx = 
\sum_{k=0}^{m} (-1)^{k} k! \binom{m}{k} \ln^{m-k}b \,
\sum_{j=0}^{p} a_{j} \frac{b^{j+1}}{(j+1)^{k+1}}. 
\end{equation}
\noindent
This expression shows that $I(P;b,m)$ is a linear combination of 
$b^{j} \, \ln^{k}b$, with $1 \leq j \leq 1 + p ( = 1 + \text{deg}(P))$ and 
$0 \leq k \leq m$. 
\end{Thm}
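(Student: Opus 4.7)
The plan is to assemble the identity from the three ingredients already established in the discussion preceding the statement: linearity of the integral in $P$, the binomial expansion produced by the substitution $x = bt$, and the evaluation of the archetypal integral $\int_0^1 t^j \ln^k t \, dt$.

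First I would write $I(P;b,m) = \sum_{j=0}^p a_j \int_0^b x^j \ln^m x \, dx$ by linearity, which reduces the problem to evaluating each monomial piece. For a single monomial, the substitution $x = bt$ (so that $\ln x = \ln b + \ln t$) together with the binomial theorem applied to $(\ln b + \ln t)^m$ gives exactly the identity
\begin{equation*}
\int_0^b x^j \ln^m x \, dx = b^{j+1} \sum_{k=0}^m \binom{m}{k} \ln^{m-k} b \int_0^1 t^j \ln^k t \, dt,
\end{equation*}
which is already recorded in the preceding discussion.

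Next I would substitute the evaluation $\int_0^1 t^j \ln^k t \, dt = (-1)^k k!/(j+1)^{k+1}$, obtained either by induction on $k$ or by the change of variables $t = e^{-s}$ that converts the integral into $(-1)^k \Gamma(k+1)/(j+1)^{k+1}$. Plugging this into the expression above and summing against the coefficients $a_j$ produces the claimed closed form after interchanging the order of the finite sums in $j$ and $k$.

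The final sentence of the statement is essentially a bookkeeping observation: in the resulting double sum, the $k$-th inner factor contributes $\ln^{m-k} b$ with $0 \le k \le m$, hence a pure power of $\ln b$ of some degree between $0$ and $m$, while the $j$-th outer factor contributes $b^{j+1}$ with $0 \le j \le p$, hence a pure power of $b$ of degree between $1$ and $p+1$. There is no real obstacle in this proof; the only point to be careful about is that the substitution $x = bt$ requires $b>0$, which is implicit from the context of elementary logarithmic integrals taken over $\mathbb{R}^+$, so the expressions $\ln^{m-k} b$ are well defined as written.
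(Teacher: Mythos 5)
Your proposal is correct and follows exactly the route the paper itself takes (linearity over the monomials, the substitution $x=bt$ with the binomial expansion of $(\ln b+\ln t)^m$, and the evaluation of $\int_0^1 t^j\ln^k t\,dt$ via induction or $t=e^{-s}$ and the gamma function). No substantive differences to report.
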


\section{Linear denominators} \label{sec-linear} 
\setcounter{equation}{0}

We now consider the integral 
\begin{equation}
f(b;r) := \int_{0}^{b} \frac{\ln x \, dx}{x + r}
\end{equation}
\noindent
for $b, \, r > 0$. This corresponds to the case in which the rational function 
in (\ref{basic}) has a single simple pole. 

The change of variables $x = rt$ produces
\begin{equation}
\int_{0}^{b} \frac{\ln x \, dx}{x + r} = 
\ln r \, \ln (1 + b/r) + \int_{0}^{b/r} \frac{\ln t \, dt}{1+t}. 
\label{case1}
\end{equation}
\noindent
Therefore, it suffices to consider the function
\begin{equation}
g(b) := \int_{0}^{b} \frac{\ln t \, dt}{1+t},
\end{equation}
\noindent
as we have
\begin{equation}
f(b;r) = \ln r \, \ln \left( 1 + \frac{b}{r} \right) + g \left( \frac{b}{r}
\right). 
\end{equation}

Before we present a discussion of the function $g$, we describe some 
elementary consequences of (\ref{case1}).  \\

\noindent
{\bf Elementary examples}. The special case $r=b$ in (\ref{case1}) yields
\begin{equation}
\int_{0}^{b} \frac{dx}{x+b} = \ln 2 \, \ln b + \ione \frac{\ln t \, dt}{1+t}. 
\end{equation}
\noindent
Expanding $1/(1+t)$ as a geometric series, we obtain
\begin{equation}
\ione \frac{\ln t \, dt}{1+t} = - \frac{1}{2} \zeta(2) = - \frac{\pi^{2}}{12}.
\end{equation}
\noindent
This appears as $\mathbf{4.231.1}$ in \cite{gr}. Differentiating
(\ref{case1}) with respect to $r$ produces
\begin{equation}
\int_{0}^{b} \frac{\ln x \, dx }{(x+r)^{2}} = 
-\frac{\ln(b+r)}{r} + \frac{\ln r}{r} + \frac{b \, \ln b}{r(r+b)}. 
\label{case3}
\end{equation}
\noindent
As $b, \, r \to 1$ we obtain 
\begin{equation}
\int_{0}^{1} \frac{\ln x \, dx}{(1+x)^{2}} = - \ln 2.
\end{equation}
\noindent
This appears as $\mathbf{4.231.6}$ in \cite{gr}. On the other hand, as $b \to \infty$
we recover $\mathbf{4.231.5}$ in \cite{gr}: 
\begin{equation}
\ift \frac{\ln x \, dx}{(x+r)^{2}} = \frac{\ln r}{r}. 
\end{equation}

\medskip
\noindent
{\bf The polylogarithm function}. The evaluation of the integral
\begin{equation}
g(b) := \int_{0}^{b} \frac{\ln t \, dt}{1+t},
\end{equation}
\noindent
requires the transcendental function
\begin{equation}
\text{Li}_{n}(x) := \sum_{k=1}^{\infty} \frac{x^{k}}{k^{n}}.
\end{equation}
\noindent
This is the {\em polylogarithm function} and it has also appeared in 
\cite{moll-gr1} in our discussion of the  family
\begin{equation}
h_{n}(a) := \ift \frac{\ln^{n}x \, dx}{(x-1)(x+a)}, \quad n \in \mathbb{R}, \,
a > 0. 
\end{equation}
\noindent
In the current context we have $n=2$ and we are dealing with the 
{\em dilogarithm function}: $\text{Li}_{2}(x)$. 

\medskip

\begin{Lem}
The function $g(b)$ is given by
\begin{equation}
g(b) = \ln b \, \ln(1+b) + \text{Li}_{2}(-b).
\end{equation}
\end{Lem}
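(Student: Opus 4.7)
The plan is to use integration by parts to transform $g(b)$ into an integral that can be handled via a term-by-term series expansion, and then recognize the result as a value of the dilogarithm.

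First I would set $u = \ln t$ and $dv = dt/(1+t)$, so that $du = dt/t$ and $v = \ln(1+t)$. Integration by parts then yields
\begin{equation}
g(b) = \Bigl[ \ln t \, \ln(1+t) \Bigr]_{0}^{b} - \int_{0}^{b} \frac{\ln(1+t)}{t}\, dt.
\end{equation}
The boundary term at $t=b$ contributes $\ln b \, \ln(1+b)$, while the term at $t=0$ vanishes because $\ln(1+t) \sim t$ as $t \to 0^{+}$, so $\ln t \, \ln(1+t) \sim t \ln t \to 0$. This is a small but necessary check.

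Next I would evaluate the remaining integral using the power series $\ln(1+t) = \sum_{k \geq 1} (-1)^{k-1} t^{k}/k$. Dividing by $t$ and integrating term by term on $[0,b]$ gives
\begin{equation}
\int_{0}^{b} \frac{\ln(1+t)}{t}\, dt = \sum_{k=1}^{\infty} \frac{(-1)^{k-1}}{k} \cdot \frac{b^{k}}{k} = -\sum_{k=1}^{\infty} \frac{(-b)^{k}}{k^{2}} = -\text{Li}_{2}(-b).
\end{equation}
Substituting back produces the claimed formula $g(b) = \ln b \, \ln(1+b) + \text{Li}_{2}(-b)$.

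The main obstacle is justifying the series manipulation when $b > 1$, since the series for $\ln(1+t)$ diverges for $|t| > 1$ and the defining series for $\text{Li}_{2}(-b)$ is then only conditionally summable (for $b=1$) or needs analytic continuation (for $b > 1$). The cleanest way around this is to first establish the identity for $0 < b \leq 1$ by the series argument above, and then extend it to all $b > 0$ by analytic continuation: both sides are real-analytic functions of $b$ on $(0, \infty)$ (using the standard analytic continuation of $\text{Li}_{2}$ through the integral representation $\text{Li}_{2}(-b) = -\int_{0}^{b} \ln(1+t)/t \, dt$, which in fact can serve as the definition of the right-hand side for $b>1$), so agreement on an interval forces agreement everywhere.
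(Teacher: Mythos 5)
Your proof is correct, but it takes a different route from the paper's. The paper substitutes $t = bs$, which converts $g(b)$ into $\ln b \, \ln(1+b) + \int_{0}^{1} \frac{b \ln s \, ds}{1+bs}$, and then expands the integrand in a geometric series and integrates term by term against $\ln s$ to recognize $\text{Li}_{2}(-b)$. You instead integrate by parts, which produces the same boundary term $\ln b \, \ln(1+b)$ and reduces the problem to $\int_{0}^{b} \ln(1+t)/t \, dt$ --- i.e.\ directly to the standard integral representation of $-\text{Li}_{2}(-b)$. The two approaches are of comparable length, but yours has two advantages: it lands on the canonical integral form of the dilogarithm rather than rediscovering its series, and it explicitly confronts the convergence issue for $b > 1$ (where the geometric or logarithmic series no longer converges on the whole interval) by proving the identity for $0 < b \leq 1$ and extending by analytic continuation. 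The paper's proof silently has the same gap for $b>1$ and does not address it, so your treatment is actually the more complete one. One trivial slip: the series $\sum_{n \geq 1} (-1)^{n}/n^{2}$ converges absolutely, not merely conditionally, at $b=1$; this does not affect your argument.
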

\begin{proof}
The change of variables $t = bs$ yields
\begin{equation}
g(b) = 
\ln b \, \ln(1+b) + \int_{0}^{1} \frac{\ln s \, ds}{1+ bs}. 
\end{equation}
\noindent
Expanding the integrand in a geometric series yields the final identity. 
\end{proof}

\begin{Thm}
\label{thm2}
Let $b, \, r > 0$. Then
\begin{equation}
\int_{0}^{b} \frac{\ln x \, dx}{x+r} = \ln b \ln \left( \frac{b+r}{r} \right) 
+ \text{Li}_{2} \left( - \frac{b}{r} \right). 
\end{equation}
\end{Thm}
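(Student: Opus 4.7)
The plan is to simply combine the two ingredients already assembled in the preceding discussion: the reduction formula coming from the substitution $x = rt$, and the explicit evaluation of $g$ given by the lemma.

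First, I would recall from \eqref{case1} that
\begin{equation*}
\int_0^b \frac{\ln x\, dx}{x+r} = \ln r\,\ln\!\left(1 + \frac{b}{r}\right) + g\!\left(\frac{b}{r}\right),
\end{equation*}
which reduces the two-parameter problem to knowing $g$ on the positive real axis. Then I would substitute the formula $g(u) = \ln u \,\ln(1+u) + \text{Li}_{2}(-u)$ from the preceding lemma with $u = b/r$.

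The only remaining step is a routine algebraic simplification of the two logarithmic terms. Writing $\ln(b/r) = \ln b - \ln r$, the coefficient of $\ln(1 + b/r)$ becomes $\ln r + (\ln b - \ln r) = \ln b$, and $\ln(1 + b/r) = \ln((b+r)/r)$, which yields the claimed expression $\ln b \,\ln((b+r)/r) + \text{Li}_2(-b/r)$.

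There is no substantive obstacle here since the hard analytic content, namely the series identity producing the dilogarithm, has been dispatched in the lemma; the theorem is just the translation of that lemma back through the affine change of variables. The only thing to watch is keeping the signs and the arguments of the logarithms straight under the substitution $u = b/r$, so that the cancellation $\ln r + \ln(b/r) = \ln b$ is applied cleanly.
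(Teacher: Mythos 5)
Your proposal is correct and is exactly the argument the paper intends: the theorem follows immediately by inserting the lemma's formula $g(u)=\ln u\,\ln(1+u)+\text{Li}_2(-u)$ with $u=b/r$ into the reduction (\ref{case1}) and simplifying $\ln r+\ln(b/r)=\ln b$. Nothing further is needed.
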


\begin{Cor}
Let $b > 0$. Then 
\begin{equation}
\int_{0}^{b} \frac{\ln x \, dx }{x+b} = \ln 2 \, \ln b - \frac{\pi^{2}}{12}.
\end{equation}
\end{Cor}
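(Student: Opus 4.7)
The plan is a direct specialization of Theorem \ref{thm2}. That theorem gives
\[
\int_0^b \frac{\ln x \, dx}{x+r} = \ln b \, \ln\!\left(\frac{b+r}{r}\right) + \text{Li}_2\!\left(-\frac{b}{r}\right),
\]
so the substitution $r = b$ collapses the right-hand side to $\ln 2 \cdot \ln b + \text{Li}_2(-1)$. The corollary therefore reduces to the single numerical identification $\text{Li}_2(-1) = -\pi^2/12$.

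That identification was, in effect, already carried out in the Elementary examples discussion: expanding $1/(1+t)$ as a geometric series and integrating termwise against $\ln t$ on $[0,1]$ was shown to yield $\int_0^1 \frac{\ln t \, dt}{1+t} = -\tfrac{1}{2}\zeta(2) = -\pi^2/12$, and by the preceding lemma this integral equals $g(1) = \ln 1 \cdot \ln 2 + \text{Li}_2(-1) = \text{Li}_2(-1)$. Alternatively one can bypass the lemma and simply split the defining series $\sum_{k \geq 1} (-1)^k/k^2$ into even and odd indices to obtain $-\zeta(2) + 2\cdot\zeta(2)/4 = -\zeta(2)/2$, with the same conclusion.

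There is no substantial obstacle here: the only thing requiring a moment's care is keeping the two distinct roles of $b$ separate, first as the upper limit of integration and then as the shift parameter $r$. Once the specialization $r = b$ is made, the argument of the dilogarithm becomes the numerical constant $-b/b = -1$ rather than a quantity depending on $b$, which is precisely what makes the final answer collapse to a pure constant plus the term $\ln 2 \cdot \ln b$.
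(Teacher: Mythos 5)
Your proof is correct and matches the paper's own argument: both specialize Theorem \ref{thm2} at $r=b$ and then use the value $\text{Li}_2(-1)=-\pi^2/12$. The extra justification you give for that value (via the geometric-series evaluation or by splitting the alternating series) is a harmless elaboration of the same step.
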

\begin{proof}
Let $r \to b$ in Theorem \ref{thm2} and use 
\begin{equation}
\text{Li}_{2}(-1) = \sum_{n=1}^{\infty} \frac{(-1)^{n}}{n^{2}} = 
- \frac{\pi^{2}}{12}.
\end{equation}
\end{proof}

\medskip

The expression in Theorem \ref{thm2} and the method of partial fractions 
gives the explicit evaluation of elementary logarithmic integrals where 
the rational function has simple poles. For example: 

\begin{Cor}
Let $0 <a  < b$ and $r_{1} \neq r_{2} \in \mathbb{R}^{+}$. Then, with 
$r = r_{2}-r_{1}$, we have
\begin{eqnarray}
\int_{a}^{b} \frac{\ln x \, dx}{(x+r_{1})(x+r_{2})} & = &  
\frac{1}{r} 
\left[ \ln b \, \ln \left( \frac{r_{2}(b+r_{1})}{r_{1}(b+r_{2})} \right) + 
\ln a \, \ln \left( \frac{r_{1}(a+r_{2})}{r_{2}(a+r_{1})} \right) \right] +
\nonumber \\
& + & \frac{1}{r} \left[ 
\text{Li}_{2} \left( - \frac{b}{r_{1}} \right) - 
\text{Li}_{2} \left( - \frac{a}{r_{1}} \right) - 
\text{Li}_{2} \left( - \frac{b}{r_{2}} \right) + 
\text{Li}_{2} \left( - \frac{a}{r_{2}} \right) \right]. \nonumber
\end{eqnarray}
\end{Cor}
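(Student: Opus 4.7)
The plan is to reduce the integral to two instances of Theorem \ref{thm2} via partial fractions, and then simplify using elementary properties of the logarithm.

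First I would write
\begin{equation}
\frac{1}{(x+r_{1})(x+r_{2})} = \frac{1}{r}\left[\frac{1}{x+r_{1}} - \frac{1}{x+r_{2}}\right],
\end{equation}
with $r = r_{2}-r_{1}$. This yields
\begin{equation}
\int_{a}^{b} \frac{\ln x \, dx}{(x+r_{1})(x+r_{2})} = \frac{1}{r}\left[ \int_{a}^{b}\frac{\ln x \, dx}{x+r_{1}} - \int_{a}^{b}\frac{\ln x \, dx}{x+r_{2}} \right].
\end{equation}
Each of the four pieces $\int_{0}^{b}\ln x \, dx/(x+r_{i})$ and $\int_{0}^{a}\ln x \, dx/(x+r_{i})$ is evaluated directly by Theorem \ref{thm2}, so that
\begin{equation}
\int_{a}^{b} \frac{\ln x \, dx}{x+r_{i}} = \ln b \, \ln\!\left(\frac{b+r_{i}}{r_{i}}\right) - \ln a \, \ln\!\left(\frac{a+r_{i}}{r_{i}}\right) + \text{Li}_{2}(-b/r_{i}) - \text{Li}_{2}(-a/r_{i}).
\end{equation}

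Subtracting the $i=2$ expression from the $i=1$ expression and dividing by $r$ gives the dilogarithmic part of the claim immediately. For the logarithmic part, I would group by $\ln b$ and $\ln a$ and use $\ln X - \ln Y = \ln(X/Y)$:
\begin{equation}
\ln\!\left(\frac{b+r_{1}}{r_{1}}\right) - \ln\!\left(\frac{b+r_{2}}{r_{2}}\right) = \ln\!\left(\frac{r_{2}(b+r_{1})}{r_{1}(b+r_{2})}\right),
\end{equation}
and the analogous identity for $a$, whose sign reverses when the $(i=2)-(i=1)$ collection is taken, so it appears as $\ln(r_{1}(a+r_{2})/r_{2}(a+r_{1}))$.

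There is no real obstacle here; the only risk is a sign or index error when juggling the four dilogarithms and four logarithm terms. I would therefore lay out the four evaluations in a single display before combining, to make the bookkeeping transparent, and then verify the final expression matches the stated formula.
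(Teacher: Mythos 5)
Your proposal is correct and is exactly the route the paper intends: partial fractions with $r=r_{2}-r_{1}$ followed by four applications of Theorem \ref{thm2}, with the logarithm terms regrouped via $\ln X - \ln Y = \ln(X/Y)$. The signs you track (in particular the reversal in the $\ln a$ term) all check out against the stated formula.
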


The special case $a = r_{1}$ and $b = r_{2}$ is of interest:

\begin{Cor}
\label{cor3}
Let $0 <a  < b$. Then 
\begin{eqnarray}
\int_{a}^{b} \frac{\ln x \, dx}{(x+a)(x+b)}  & = & 
\frac{1}{b-a} \left[ \ln(ab) \ln(a+b) -\ln2 \ln(ab) - 
2 \ln a \, \ln b \right] \nonumber \\
&+  &  \frac{1}{b-a} \left[ - 2\text{Li}_{2}(-1)
+ \text{Li}_{2} \left( - \frac{b}{a} \right) + 
\text{Li}_{2} \left( - \frac{a}{b} \right) \right]. 
\nonumber
\end{eqnarray}
\end{Cor}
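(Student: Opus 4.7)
The plan is to specialize the preceding corollary (with arbitrary integration bounds $a,b$ and distinct poles $r_1,r_2$) to the case $r_1=a$, $r_2=b$, so that $r=r_2-r_1=b-a$. No new analytic input is needed; the only work is symbolic simplification, so the main task is bookkeeping rather than insight.

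First I would substitute $r_1=a$ and $r_2=b$ directly into the statement of the previous corollary. The dilogarithm block becomes
\[
\operatorname{Li}_{2}\!\left(-\tfrac{b}{a}\right) - \operatorname{Li}_{2}\!\left(-\tfrac{a}{a}\right) - \operatorname{Li}_{2}\!\left(-\tfrac{b}{b}\right) + \operatorname{Li}_{2}\!\left(-\tfrac{a}{b}\right),
\]
and since $\operatorname{Li}_{2}(-a/a)=\operatorname{Li}_{2}(-b/b)=\operatorname{Li}_{2}(-1)$, the two middle terms coalesce into $-2\operatorname{Li}_{2}(-1)$, producing exactly the dilogarithmic part of the claim.

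Next I would simplify the logarithmic block. After the substitution, the arguments of the two inner logarithms become
\[
\frac{r_{2}(b+r_{1})}{r_{1}(b+r_{2})}=\frac{b(a+b)}{a\cdot 2b}=\frac{a+b}{2a},
\qquad
\frac{r_{1}(a+r_{2})}{r_{2}(a+r_{1})}=\frac{a(a+b)}{b\cdot 2a}=\frac{a+b}{2b}.
\]
Expanding each $\ln\frac{a+b}{2a}$ and $\ln\frac{a+b}{2b}$ as differences, combining the $\ln(a+b)$ terms via $\ln a+\ln b=\ln(ab)$, and collecting the $\ln 2$ contributions yields
\[
\ln b\ln\!\tfrac{a+b}{2a}+\ln a\ln\!\tfrac{a+b}{2b}
=\ln(ab)\ln(a+b)-\ln 2\,\ln(ab)-2\ln a\ln b,
\]
which matches the first bracket of the claim.

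The only potential obstacle is a sign or factor slip in the log manipulation; to guard against this I would double-check the combination of the cross-terms $-\ln a\ln b$ (one arising from each of the two products) to confirm they sum to the coefficient $-2$. Dividing through by $b-a$ then gives the stated identity, completing the proof.
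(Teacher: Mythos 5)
Your proposal is correct and is precisely the derivation the paper intends: the text introduces Corollary \ref{cor3} with the words ``the special case $a=r_1$ and $b=r_2$'' of the preceding corollary, and your substitution, the coalescing of the two middle dilogarithms into $-2\,\text{Li}_2(-1)$, and the expansion of the logarithmic block all check out. No gaps.
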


\medskip

The integral in Corollary \ref{cor3} appears as $\mathbf{4.232.1}$ in \cite{gr}.  
An interesting problem is to derive $\mathbf{4.232.2}$
\begin{equation}
\ift \frac{\ln x \, dx}{(x+u)(x+v)} = \frac{\ln^{2}u - \ln^{2}v}{2(u-v)}
\end{equation}
\noindent
directly from Corollary  \ref{cor3}.  \\

We now present an elementary evaluation of this integral and obtain from it 
an identity of Euler.   We prove that 
\begin{equation}
\int_{a}^{b} \frac{\ln x \, dx}{(x+a)(x+b)} = 
\frac{\ln ab}{2(b-a)} \ln \frac{(a+b)^2}{4ab}. 
\end{equation}

\no
\begin{proof}
The partial fraction decomposition
\ba
\frac{1}{(x+a)(x+b)} & = & \frac{1}{b-a} \left( \frac{1}{x+a} - \frac{1}{x+b}
\right). \nn
\ea
\no
reduces the problem to the evaluation of 
\begin{equation}
I_{1}  =  \int_{a}^{b} \frac{\ln x \; dx}{x+a} \text{ and } 
I_{2}  =  \int_{a}^{b} \frac{\ln x \; dx}{x+b}. \nn 
\end{equation}

\medskip

The change of variables $x = at$ gives, with $c = b/a$,  
\ba
I_{1} & = & \int_{1}^{c} \frac{\ln(at) \; dt}{1+t} \nn \\
     & = & \ln a \int_{1}^{c} \frac{dt}{1+t} + \int_{1}^{c} \frac{\ln t}{1+t}
\, dt \nn \\
& = & \ln a \ln(1 + c) - \ln a \ln 2 + \int_{1}^{c} \frac{\ln t}{1+t} \, dt. 
\nn
\ea
\no
Similarly,
\ba
I_{2} & = & \ln b \ln 2 - \ln b \ln (1 + 1/c) + \int_{1}^{1/c} \frac{\ln t}
{1+t} \, dt. \nn
\ea
\no
Therefore 
\ba
I_{1}-I_{2} & = & \ln a \ln(1+ c) + \ln b \ln(1+1/c) - \ln 2 \ln a - \ln 2 
\ln b + \nn \\
 & + & \int_{1}^{c} \frac{\ln t}{1+t} \, dt - \int_{1/c}^{1} \frac{\ln t}{1+t}
\, dt. \nn
\ea
\no
Let $s = 1/t$ in the second integral to get
\ba
\int_{1/c}^{1} \frac{\ln t}{1+t} \, dt & = & 
\int_{c}^{1} \frac{\ln s}{s(1+s)} \, ds. \nn
\ea
\no
Replacing in the expression for $I_{1}-I_{2}$ yields
\ba
I_{1}-I_{2} & = & \ln a \left( \ln (a+b) -\ln a - \ln 2 \right) -
 \ln b \left( \ln 2 - \ln (a+b) + \ln b \right) + \nn \\
 & + & \int_{1}^{c} \frac{\ln t}{t} \, dt. \nn
\ea
\no
The last integral can now be evaluated by elementary means to 
produced the result. 
\end{proof}

Now comparing the two evaluation of the integral in Corollary \ref{cor3} 
produces an identity for the dilogarithm function. 

\begin{Cor}
The dilogarithm function satisfies
\begin{equation}
\text{Li}_{2}(-z) + \text{Li}_{2} \left(- \frac{1}{z} \right) 
= -\frac{\pi^{2}}{6} - \frac{1}{2} \ln^{2}(z).
\end{equation}
\end{Cor}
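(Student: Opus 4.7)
The plan is to equate the two evaluations of $\int_{a}^{b} \ln x \,dx/((x+a)(x+b))$ given in the lines immediately preceding the corollary and solve for the sum $\text{Li}_2(-b/a) + \text{Li}_2(-a/b)$. The dilogarithm terms appear only in the first evaluation (from Corollary \ref{cor3}), whereas the second evaluation is a closed elementary expression. So subtracting one from the other must yield an identity that expresses the combination $\text{Li}_2(-b/a) + \text{Li}_2(-a/b)$ in purely elementary terms involving $a$ and $b$.

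First I would introduce the single variable $z = b/a$, so that $-b/a = -z$ and $-a/b = -1/z$. I would also substitute $\text{Li}_2(-1) = -\pi^2/12$ (already used in the text) so that the combination $-2\,\text{Li}_2(-1)$ in Corollary \ref{cor3} becomes $\pi^2/6$. Next I would multiply both evaluations by the common factor $b-a$ to clear denominators, and expand $\ln((a+b)^2/(4ab)) = 2\ln(a+b) - 2\ln 2 - \ln(ab)$ in the elementary formula so that it is written in the same basis as the terms coming from Corollary \ref{cor3}.

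The bulk of the work is then a bookkeeping exercise: the terms $\ln(ab)\ln(a+b)$ and $\ln 2\,\ln(ab)$ appear on both sides and cancel outright, leaving a pure quadratic in $\ln a$ and $\ln b$ on the elementary side, namely $-\tfrac{1}{2}\ln^2(ab)$, against $-2\ln a\ln b + \pi^2/6 + \text{Li}_2(-z) + \text{Li}_2(-1/z)$ on the other side. Expanding $\ln^2(ab) = \ln^2 a + 2\ln a\ln b + \ln^2 b$ consolidates the left-hand side into $-\tfrac{1}{2}(\ln a - \ln b)^2 = -\tfrac{1}{2}\ln^2(a/b) = -\tfrac{1}{2}\ln^2 z$, and rearranging produces the claimed identity.

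The main (in fact only) obstacle is purely algebraic: making sure every sign and every occurrence of $\ln 2$, $\ln(a+b)$, and $\ln(ab)$ cancels correctly between the two evaluations. There is no analytic difficulty, since both evaluations are valid for arbitrary $0 < a < b$ and the resulting identity therefore holds for all $z > 0$; extending to other $z$ (if desired) would follow by analytic continuation, but is not needed for the statement as posed.
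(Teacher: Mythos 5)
Your proposal is correct and is exactly the paper's argument: the paper derives the corollary by ``comparing the two evaluations'' of $\int_a^b \ln x\,dx/((x+a)(x+b))$ (Corollary \ref{cor3} versus the elementary closed form), and your bookkeeping — cancelling $\ln(ab)\ln(a+b)$ and $\ln 2\,\ln(ab)$, then combining $-\tfrac12\ln^2(ab)+2\ln a\ln b=-\tfrac12\ln^2(b/a)$ — carries out the details the paper leaves implicit. No gaps.
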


This is the first of many interesting functional equations satisfied by the
polylogarithm functions. It was established by L. Euler in $1768$. The 
reader will find in \cite{lewin1} a nice 
description of them. 

\section{A single multiple pole} \label{sec-multiple} 
\setcounter{equation}{0}

In this section we consider the evaluation of 
\begin{equation}
f_{n}(b,r) := \int_{0}^{b} \frac{\ln x \, dx}{(x+r)^{n}}. 
\end{equation}
\noindent
This corresponds to the elementary rational integrals with a single pole 
(at $x = -r)$. The change of variables $x = rt$ yields
\begin{equation}
f_{n}(b,r) = \frac{\ln r}{(n-1)r^{n-1}} 
\left[ \frac{(b+r)^{n-1} - r^{n-1}}{(b+r)^{n-1}} \right] + 
\frac{1}{r^{n-1}} h_{n}(b/r), 
\nonumber
\end{equation}
\noindent
where
\begin{equation}
h_{n}(b) := \int_{0}^{b} \frac{\ln t \, dt}{(1+t)^{n}}. 
\end{equation}

\medskip

We first establish a recurrence for $h_{n}$. \\

\begin{Thm}
Let $n > 2$ and $b > 0$. Then $h_{n}$ satisfies the recurrence
\begin{equation}
h_{n}(b) = \frac{n-2}{n-1} h_{n-1}(b) + 
\frac{b \, \ln b }{(n-1)(1+b)^{n-1}} + 
\frac{ 1 - (1+b)^{n-2}}{(n-1)(n-2) (1+b)^{n-2}}. 
\end{equation}
\end{Thm}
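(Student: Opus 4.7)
The plan is to derive the recurrence through a single integration by parts. The obvious antiderivative for $dv = dt/(1+t)^n$, namely $v = -1/[(n-1)(1+t)^{n-1}]$, is inconvenient because the boundary expression $v(t)\ln t$ diverges as $t \to 0^+$. I would instead use a different antiderivative that vanishes at the origin, which will avoid dealing with a cancellation of divergent terms. The key identity is
\[
\frac{d}{dt}\!\left[\frac{t}{(1+t)^{n-1}}\right] = \frac{(1+t) - (n-1)t}{(1+t)^n} = \frac{1-(n-2)t}{(1+t)^n},
\]
which rearranges to
\[
\frac{1}{(1+t)^n} = \frac{d}{dt}\!\left[\frac{t}{(1+t)^{n-1}}\right] + \frac{(n-2)\,t}{(1+t)^n}.
\]

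Inserting this decomposition into $h_n(b)$ splits the integral in two. For the first piece, integration by parts with $u = \ln t$ and $v = t/(1+t)^{n-1}$ is clean: the boundary term $\left[t\ln t/(1+t)^{n-1}\right]_0^b$ equals $b\ln b/(1+b)^{n-1}$ since $t\ln t \to 0$ at $0$, and the remaining integral $\int_0^b dt/(1+t)^{n-1}$ evaluates to $[(1+b)^{n-2}-1]/[(n-2)(1+b)^{n-2}]$. For the second piece, writing $t = (1+t)-1$ gives
\[
\int_0^b \frac{t\ln t}{(1+t)^n}\,dt \;=\; h_{n-1}(b) - h_n(b),
\]
which reintroduces $h_n(b)$ on the right.

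Assembling the two pieces yields a linear equation for $h_n(b)$,
\[
h_n(b) = \frac{b\ln b}{(1+b)^{n-1}} - \frac{(1+b)^{n-2}-1}{(n-2)(1+b)^{n-2}} + (n-2)\bigl[h_{n-1}(b) - h_n(b)\bigr],
\]
and moving $(n-2)h_n(b)$ to the left collects a coefficient of $n-1$, giving exactly the claimed formula after dividing through.

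The main obstacle is conceptual rather than computational: spotting that the boundary singularity at $t=0$ forces one to abandon the standard antiderivative in favor of $t/(1+t)^{n-1}$, which simultaneously kills the boundary contribution at $0$ and produces the term $b\ln b/(1+b)^{n-1}$ required by the answer. Once this choice is in place, every subsequent step is routine, and the hypothesis $n>2$ enters precisely to allow the evaluation of $\int_0^b dt/(1+t)^{n-1}$ without producing a logarithm, equivalently to justify the division by $n-2$.
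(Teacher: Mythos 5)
Your proof is correct. It is built from the same two ingredients as the paper's --- an integration by parts and the split of the numerator using $(1+t)$ minus something --- but you apply them in the opposite order, which changes the mechanics. The paper first writes $1 = (1+t) - t$ to get $h_{n}(b) = h_{n-1}(b) - \int_{0}^{b} t \ln t \, dt/(1+t)^{n}$ and then integrates by parts with $u = t\ln t$ and $dv = dt/(1+t)^{n}$; since $t\ln t \to 0$ at the origin and $v$ is bounded there, the boundary singularity you worried about never arises, and the recurrence drops out directly because the parts step returns $\tfrac{1}{n-1}h_{n-1}(b)$, which combines with the leading $h_{n-1}(b)$ to give the coefficient $\tfrac{n-2}{n-1}$. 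You instead decompose the kernel via $\tfrac{d}{dt}\bigl[t/(1+t)^{n-1}\bigr] = \bigl(1-(n-2)t\bigr)/(1+t)^{n}$, integrate by parts with $u=\ln t$ against the exact-derivative piece, and use $t=(1+t)-1$ on the remainder, which reintroduces $h_{n}(b)$ and forces you to solve a linear equation; the coefficient $n-1$ then appears as $1+(n-2)$. Both arguments are valid and of comparable length; the paper's choice of $u = t\ln t$ is the slicker way to neutralize the singularity at $t=0$, while your version makes that issue, and the role of the hypothesis $n>2$ in evaluating $\int_{0}^{b} dt/(1+t)^{n-1}$, more explicit.
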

\begin{proof}
Start with 
\begin{equation}
h_{n}(b)  =  \int_{0}^{b} \frac{[ (1+t)-t] \, \ln t \, dt}{(1+t)^{n} }
=  h_{n-1}(b) - \int_{0}^{b} \frac{t \, \ln t \, dt}{(1+t)^{n}}. 
\nonumber 
\end{equation}
\noindent
Integrate by parts in the last integral, with $u = t \, \ln t$ and 
$dv = dt/(1+t)^{n}$ to produce the result. 
\end{proof}

\medskip

The initial condition for this recurrence is obtained from the value
\begin{equation}
h_{2}(b) = \frac{b}{1+b} \ln b - \ln(1+b). 
\end{equation}
\noindent
This expression follows by a direct integration by parts in
\begin{equation}
h_{2}(b) = - \lim\limits_{\epsilon \to 0} \int_{\epsilon}^{b} 
\ln t \, \frac{d}{dt} (1+t)^{-1} \, dt. 
\end{equation}

\medskip

The first few values of $h_{n}(b)$ suggest the introduction of the function
\begin{equation}
q_{n}(b) := (1+b)^{n-1}h_{n}(b),
\end{equation}
\noindent
for $n \geq 2$. For example, 
\begin{equation}
q_{2}(b) = b \ln b - (1+b) \, \ln(1+b). 
\end{equation}

The recurrence for $h_{n}$ yields one for $q_{n}$.

\begin{Cor}
\label{cor2}
The  recurrence
\begin{equation}
q_{n}(b) = \frac{(n-2)}{(n-1)}(1+b) q_{n-1}(b) + 
\frac{b \, \ln b}{n-1} - \frac{(1+b) \left[ (1+b)^{n-2}-1 \right]}{(n-1)(n-2)},
\label{recurq}
\end{equation}
\noindent
holds for $n \geq 2$.
\end{Cor}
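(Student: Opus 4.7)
The plan is straightforward: the target recurrence for $q_n$ is nothing more than the recurrence for $h_n$ in the preceding theorem, reweighted by the factor $(1+b)^{n-1}$ that defines $q_n$. So I would simply take the relation
\[
h_{n}(b) = \frac{n-2}{n-1}\, h_{n-1}(b) + \frac{b\,\ln b}{(n-1)(1+b)^{n-1}} + \frac{1-(1+b)^{n-2}}{(n-1)(n-2)(1+b)^{n-2}},
\]
and multiply both sides by $(1+b)^{n-1}$.

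On the left this gives $q_n(b)$ by definition. On the right, the first term becomes $\tfrac{n-2}{n-1}(1+b)\cdot (1+b)^{n-2} h_{n-1}(b) = \tfrac{n-2}{n-1}(1+b)\, q_{n-1}(b)$, the second term becomes $\tfrac{b\ln b}{n-1}$ (the power of $1+b$ cancels completely), and the third term becomes $\tfrac{(1+b)[1-(1+b)^{n-2}]}{(n-1)(n-2)}$, which is exactly $-\tfrac{(1+b)[(1+b)^{n-2}-1]}{(n-1)(n-2)}$. Collecting these three pieces gives formula (\ref{recurq}).

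The only step that could trip one up is bookkeeping: one must match the exponent of $1+b$ in the denominators of the last two terms ($n-1$ and $n-2$ respectively) against the overall factor $(1+b)^{n-1}$, and then absorb the remaining $(1+b)^{n-2}$ factor in the first term into $q_{n-1}(b)$. Beyond that, no analytic input is needed; it is a purely algebraic rearrangement. Hence there is no substantive obstacle, and the corollary is really just a restatement of the $h_n$-recurrence in the normalized variable $q_n$.
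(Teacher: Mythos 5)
Your proof is correct and is exactly what the paper intends: the paper simply remarks that ``the recurrence for $h_{n}$ yields one for $q_{n}$,'' and your multiplication by $(1+b)^{n-1}$ carries this out with the right bookkeeping of the powers of $1+b$. (The only caveat, inherited from the paper's own statement, is that the $h_{n}$ recurrence is given for $n>2$, so the range ``$n\geq 2$'' in the corollary should really be $n\geq 3$.)
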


\medskip

Corollary \ref{cor2} establishes the existence of functions $X_{n}(b), \, 
Y_{n}(b)$ and $Z_{n}(b)$, such that
\begin{equation}
q_{n}(b) = X_{n}(b) \, \ln b + Y_{n}(b) \, \ln(1+b) + Z_{n}(b). 
\end{equation}
\noindent
The recurrence (\ref{recurq}) produces explicit expression for  each of these
parts. 

\begin{Prop}
Let $n \geq 2$ and $b > 0$. Then
\begin{equation}
X_{n}(b) = \frac{(1+b)^{n-1}-1}{n-1}. 
\end{equation}
\end{Prop}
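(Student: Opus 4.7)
The plan is a straightforward induction on $n$, driven by the recurrence in Corollary \ref{cor2}. The key observation is that the decomposition $q_{n}(b) = X_{n}(b)\ln b + Y_{n}(b)\ln(1+b) + Z_{n}(b)$ is respected term-by-term by the recurrence: on the right-hand side of (\ref{recurq}), the only $\ln b$ contributions come from the factor $\frac{n-2}{n-1}(1+b)\,q_{n-1}(b)$ (through $X_{n-1}(b)$) and from the explicit term $\frac{b\ln b}{n-1}$. Hence $X_{n}$ satisfies the scalar recurrence
\begin{equation}
X_{n}(b) = \frac{n-2}{n-1}(1+b)\,X_{n-1}(b) + \frac{b}{n-1}.
\nonumber
\end{equation}

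First I would verify the base case $n=2$: from $q_{2}(b) = b\ln b - (1+b)\ln(1+b)$ one reads off $X_{2}(b) = b$, which agrees with $\frac{(1+b)^{1}-1}{1} = b$. Then, assuming inductively that $X_{n-1}(b) = \frac{(1+b)^{n-2}-1}{n-2}$, I would substitute into the displayed recurrence and simplify:
\begin{equation}
X_{n}(b) = \frac{(1+b)\bigl[(1+b)^{n-2}-1\bigr]}{n-1} + \frac{b}{n-1} = \frac{(1+b)^{n-1} - (1+b) + b}{n-1} = \frac{(1+b)^{n-1}-1}{n-1},
\nonumber
\end{equation}
which is the claimed formula.

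The only real content is the decomposition statement itself, i.e.\ that a recurrence with polynomial coefficients in $b$ and inhomogeneous term a polynomial multiple of $\ln b$ preserves the affine $\{1,\ln b,\ln(1+b)\}$ structure; once that is granted, the argument is a one-line induction. I do not anticipate any obstacle beyond bookkeeping, and the factor $n-2$ from the recurrence conveniently cancels the denominator in the inductive hypothesis, which is what makes the closed form clean.
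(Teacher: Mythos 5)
Your proof is correct and matches the paper's own argument: both extract the scalar recurrence $X_{n}(b) = \frac{n-2}{n-1}(1+b)X_{n-1}(b) + \frac{b}{n-1}$ from (\ref{recurq}), check the base case $X_{2}(b)=b$, and induct. You simply carry out the inductive computation that the paper leaves to the reader.
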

\begin{proof}
The function $X_{n}$ satisfies the recurrence
\begin{equation}
X_{n}(b)  =  \frac{n-2}{n-1}(1+b)X_{n-1}(b) + \frac{b}{n-1}. \label{recX}
\end{equation}
\noindent
The initial condition is $X_{2}(b) = b$. The result is now easily established
by induction.
\end{proof}

\begin{Prop}
Let $n \geq 2$ and $b > 0$. Then
\begin{equation}
Y_{n}(b) = -\frac{(1+b)^{n-1}}{n-1}. 
\end{equation}
\end{Prop}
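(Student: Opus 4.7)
The plan is to follow exactly the same strategy used for $X_n(b)$: extract from the recurrence (\ref{recurq}) a recurrence satisfied purely by $Y_n(b)$, determine the initial condition from $q_2(b)$, and finish by a routine induction on $n$.

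First I would observe that the decomposition $q_n(b) = X_n(b)\ln b + Y_n(b) \ln(1+b) + Z_n(b)$ is unique once we require $X_n, Y_n, Z_n$ to be rational (actually polynomial) in $b$, because $1,\ln b,\ln(1+b)$ are linearly independent over the field of rational functions. Substituting this decomposition into (\ref{recurq}) and matching the coefficients of $\ln(1+b)$ — the only terms on the right-hand side that contribute $\ln(1+b)$ come from $\tfrac{n-2}{n-1}(1+b)q_{n-1}(b)$, since $\tfrac{b\ln b}{n-1}$ and $-\tfrac{(1+b)[(1+b)^{n-2}-1]}{(n-1)(n-2)}$ carry no $\ln(1+b)$ — yields the clean recurrence
\begin{equation}
Y_n(b) = \frac{n-2}{n-1}\,(1+b)\,Y_{n-1}(b). \nonumber
\end{equation}

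Next I would pin down the initial condition. From $q_2(b) = b\ln b - (1+b)\ln(1+b)$, reading off the coefficient of $\ln(1+b)$ gives $Y_2(b) = -(1+b)$, which agrees with the proposed formula $-\tfrac{(1+b)^{2-1}}{2-1}$.

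Finally I would run the induction: assuming $Y_{n-1}(b) = -\tfrac{(1+b)^{n-2}}{n-2}$, plug into the recurrence to obtain
\begin{equation}
Y_n(b) = \frac{n-2}{n-1}\,(1+b)\cdot\left(-\frac{(1+b)^{n-2}}{n-2}\right) = -\frac{(1+b)^{n-1}}{n-1}, \nonumber
\end{equation}
which is the desired formula. There is no real obstacle here; the only small point worth being explicit about is the linear-independence justification for matching coefficients of $\ln(1+b)$, and in particular checking that the inhomogeneous terms in (\ref{recurq}) contribute nothing to $Y_n$, so that $Y_n$ satisfies a homogeneous first-order recurrence whose solution is immediate.
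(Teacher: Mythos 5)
Your proposal is correct and follows essentially the same route as the paper: extract the homogeneous recurrence $Y_{n}(b) = \tfrac{n-2}{n-1}(1+b)Y_{n-1}(b)$ from (\ref{recurq}), read off the initial condition $Y_{2}(b) = -(1+b)$ from $q_{2}(b)$, and conclude by induction. The only addition is your explicit linear-independence justification for matching coefficients, which the paper leaves implicit.
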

\begin{proof}
The function $Y_{n}$ satisfies the recurrence
\begin{equation}
Y_{n}(b)  =  \frac{n-2}{n-1}(1+b)Y_{n-1}(b). \label{recY}
\end{equation}
\noindent
This recurrence and the initial condition $Y_{2}(b) = -(1+b)$, yield the result.
\end{proof}

\medskip 

It remains to identify the function $Z_{n}(b)$. It satisfies the 
recurrence
\begin{equation}
Z_{n}(b)  =  \frac{n-2}{n-1}(1+b) Z_{n-1}(b) - 
\frac{(1+b) \left[ (1+b)^{n-2}-1 \right]}{(n-2)(n-1)}. \label{recZ} 
\end{equation}
This recurrence and the initial condition $Z_{2}(b) = 0$ suggest the 
definition
\begin{equation}
T_{n}(b) := - \frac{(n-1)! \, Z_{n}(b)}{b(1+b)}. 
\end{equation}

\medskip

\begin{Lem}
The function $T_{n}(b)$ is a polynomial of degree $n-3$ with positive 
integer coefficients.
\end{Lem}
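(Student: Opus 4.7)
The plan is to convert the recurrence (\ref{recZ}) for $Z_n$ into a clean recurrence for $T_n$ and then induct on $n$. First I would solve the definition $T_{n-1}(b) = -(n-2)!\,Z_{n-1}(b)/[b(1+b)]$ for $Z_{n-1}$, substitute into (\ref{recZ}), and multiply through by $-(n-1)!/[b(1+b)]$. After the cancellations $(n-2)(n-1)!/[(n-1)(n-2)!] = n-2$ and $(n-1)!/[(n-2)(n-1)] = (n-3)!$, this should collapse to the clean recurrence
\begin{equation}
T_n(b) = (n-2)(1+b)\,T_{n-1}(b) + \frac{(n-3)!}{b}\left[(1+b)^{n-2} - 1\right].
\end{equation}

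The binomial expansion $[(1+b)^{n-2}-1]/b = \sum_{k=0}^{n-3} \binom{n-2}{k+1} b^{k}$ shows that the correction term is, patently, a polynomial in $b$ of degree $n-3$ with positive integer coefficients, since $(n-3)!\binom{n-2}{k+1}$ is a positive integer for each $k$ in the range.

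The remainder is a straightforward induction on $n \geq 3$. For the base case, $Z_{2} \equiv 0$ gives $T_{2} \equiv 0$, so the displayed recurrence produces $T_{3}(b) = (0!/b)[(1+b)-1] = 1$, a degree-$0$ polynomial with positive integer coefficient. For the inductive step, assuming $T_{n-1}$ is a polynomial of degree $n-4$ with positive integer coefficients, the product $(n-2)(1+b)T_{n-1}(b)$ has degree $n-3$ and positive integer coefficients, and adding the binomial correction preserves both properties since every contribution is positive, ruling out any cancellation of the leading coefficient.

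The main obstacle is really just the factorial bookkeeping needed to derive the clean recurrence for $T_n$; once that is in hand the induction is essentially mechanical. As a sanity check, the recurrence yields $T_4(b) = 2(1+b) + (2b+b^2)/b = 3b+4$, matching a direct computation from $Z_4(b) = -b(1+b)(3b+4)/6$.
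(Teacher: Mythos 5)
Your proposal is correct and follows essentially the same route as the paper: derive the recurrence $T_n(b) = (n-2)(1+b)T_{n-1}(b) + (n-3)!\,[(1+b)^{n-2}-1]/b$ from (\ref{recZ}) and observe the right-hand side is a polynomial. You simply carry out more explicitly what the paper leaves to the reader (the binomial expansion, the base case $T_3=1$, and the induction confirming degree $n-3$ and positive integer coefficients).
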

\begin{proof}
The function $T_{n}(b)$ satisfies the recurrence
\begin{equation}
T_{n}(b) = (n-2)(1+b)T_{n-1}(b) + (n-3)! 
\left[ \frac{(1+b)^{n-2}-1}{b} \right]. 
\end{equation}
\noindent
Now simply observe that the right hand side is a polynomial in $b$.
\end{proof}

Properties of the polynomial $T_{n}(b)$ will be described in future 
publications. We now simply observe that its coefficients are {\em unimodal}.
Recall that a polynomial 
\begin{equation}
P_{n}(b) = \sum_{k=0}^{n} c_{k}b^{k} 
\end{equation}
\noindent
is called {\em unimodal} if there is an  index $n^{*}$, such that 
$c_{k} \leq c_{k+1}$ for $0 \leq k \leq n^{*}$ and 
$c_{k} \geq c_{k+1}$ for $n^{*} < k \leq n$. That is, the sequence of 
coefficients of $P_{n}$ has a single peak. Unimodal polynomials appear in 
many different branches of Mathematics. The reader will find in 
\cite{brenti1} and  \cite{stanley1} information about this property. We now 
use the result of \cite{bomouni1} to establish the unimodality of  $T_{n}$. 

\begin{Thm}
Suppose $c_{k} > 0$ is a nondecreasing sequence. Then $P(x+1)$ is unimodal.
\end{Thm}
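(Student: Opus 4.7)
The plan is to pass from the nondecreasing hypothesis on $(c_k)$ to a nonnegative decomposition of $P(x+1)$ whose pieces are individually unimodal in a uniform way, so that the nonnegative combination stays unimodal. Writing $P(x)=\sum_{k=0}^{n}c_kx^k$, the coefficients of $Q(x) := P(x+1)$ are $d_j=\sum_{k=j}^{n}\binom{k}{j}c_k$, and the task is to show $(d_j)_{j=0}^{n}$ is unimodal. Setting $e_0:=c_0>0$ and $e_i:=c_i-c_{i-1}\ge 0$ for $1\le i\le n$, the identity $c_k=\sum_{i=0}^{k}e_i$ yields
\[
P(x+1) \;=\; \sum_{i=0}^{n} e_i\, V_i(x), \qquad V_i(x) := \sum_{k=i}^{n}(x+1)^k = \frac{(x+1)^{n+1}-(x+1)^i}{x}.
\]

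Since the $e_i$ are nonnegative, the theorem will follow once I show that the sequences $(v_{i,j})_{j=0}^{n}$, where $v_{i,j} := \binom{n+1}{j+1}-\binom{i}{j+1}$, are nondecreasing on $[0, \lfloor n/2\rfloor]$ and nonincreasing on $[\lfloor n/2\rfloor, n]$ uniformly in $i$. Using the identity $(j+1)\binom{k}{j+1}=(k-j)\binom{k}{j}$ together with the hockey-stick formula, one derives
\[
v_{i,j+1}-v_{i,j} \;=\; \frac{1}{j+1}\sum_{k=i}^{n}(k-2j-1)\binom{k}{j}.
\]
For $j \geq \lceil n/2 \rceil$ every admissible $k \leq n$ satisfies $k \leq 2j+1$, so every term is nonpositive and the difference is $\leq 0$ uniformly in $i$. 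Dually, for $j \leq \lfloor n/2 \rfloor - 1$ one needs the positive contributions from $k > 2j+1$ to outweigh the negative ones; the cleanest route uses the equivalent form $v_{i,j+1}-v_{i,j} = \bigl[\binom{n+1}{j+2}-\binom{n+1}{j+1}\bigr] - \bigl[\binom{i}{j+2}-\binom{i}{j+1}\bigr]$, where the first bracket is nonnegative for $j \leq (n-2)/2$, and the second bracket is nonpositive whenever $i \leq 2j+2$; the remaining case $i \geq 2j+3$ reduces to a sum of strictly positive terms in the displayed formula above. Finally, for odd $n$ the central transition at $j=(n-1)/2$ satisfies $2j+1=n$, so $v_{i,\lceil n/2\rceil}-v_{i,\lfloor n/2\rfloor}$ is a sum of nonpositive terms and is therefore $\leq 0$ for every $i$.

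Granted these uniform monotonicities, the nonnegative combination $d_j = \sum_i e_i v_{i,j}$ is nondecreasing on $[0,\lfloor n/2\rfloor]$ and nonincreasing on $[\lfloor n/2\rfloor, n]$, establishing unimodality with peak at $\lfloor n/2\rfloor$. The main obstacle is the uniform-in-$i$ sign analysis above: making the comparisons precise in the case $i \leq 2j+2$ with $j$ close to $\lfloor n/2\rfloor - 1$ is delicate, since positive and negative terms in the displayed sum are of comparable magnitude and one must appeal to the telescoped bracket form (or an Abel-type rearrangement) to conclude. This is the combinatorial content of the lemma from \cite{bomouni1} that the theorem invokes.
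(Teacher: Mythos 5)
Your proof is correct and complete. Note first that the paper itself offers no proof of this statement: it is imported verbatim from \cite{bomouni1} and used as a black box to deduce the unimodality of $T_{n}(b)$ via the shifted polynomial $S_{n}(b)=T_{n}(b-1)$, so there is no internal argument to compare yours against. What you supply is a self-contained proof of the cited criterion, and it holds up under checking: the Abel-summation decomposition $P(x+1)=\sum_i e_i V_i(x)$ with $e_i=c_i-c_{i-1}\ge 0$ is exact; the coefficient formula $v_{i,j}=\binom{n+1}{j+1}-\binom{i}{j+1}=\sum_{k=i}^{n}\binom{k}{j}$ and the difference formula $v_{i,j+1}-v_{i,j}=\frac{1}{j+1}\sum_{k=i}^{n}(k-2j-1)\binom{k}{j}$ are both correct; and your case analysis genuinely covers everything. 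In particular, for $j\le \lfloor n/2\rfloor-1$ the split into $i\le 2j+2$ (handled by the bracket form, where the first bracket is $\ge 0$ precisely for $j\le (n-2)/2$ and the second is $\le 0$ for $i\le 2j+3$) and $i\ge 2j+3$ (where every summand $(k-2j-1)\binom{k}{j}$ is positive and the sum is nonempty since $i\le n$) exhausts all $i$, so the closing worry about the comparison being ``delicate'' is unfounded --- your two cases already resolve it. The separate treatment of the central transition $j=(n-1)/2$ for odd $n$ is also needed and correctly handled. The one essential idea, which you identify clearly, is that each $V_i(x)$ is not merely unimodal but unimodal with a peak at the \emph{common} location $\lfloor n/2\rfloor$, uniformly in $i$; without that uniformity a nonnegative combination of unimodal sequences need not be unimodal, so this is exactly the right thing to prove. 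Your argument thus reconstructs the combinatorial content of the Boros--Moll criterion that the paper invokes, and even sharpens the statement by locating the peak of $P(x+1)$ at $\lfloor n/2\rfloor$.
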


Therefore we consider the polynomial $S_{n}(b) := T_{n}(b-1)$. It satisfies 
the recurrence
\begin{equation}
S_{n}(b) = b(n-2)S_{n-1}(b) + (n-3)! \, \sum_{r=0}^{n-3}b^{r}. 
\end{equation}
\noindent
Now write 
\begin{equation}
S_{n}(b) = \sum_{k=0}^{n-3} c_{k,n}b^{k},
\end{equation}
\noindent
and conclude that $c_{0,n} = (n-3)!$ and 
\begin{equation}
c_{k,n} = (n-2)c_{k-1,n-1} + (n-3)!,
\end{equation}
\noindent
from which it follows that
\begin{equation}
c_{k+1,n}-c_{k,n} = (n-2) \left[ c_{k,n-1}-c_{k-1,n-1} \right]. 
\end{equation}
\noindent
We conclude that $c_{k,n}$ is a nondecreasing sequence.

\medskip

\begin{Thm}
The polynomial $T_{n}(b)$ is unimodal.
\end{Thm}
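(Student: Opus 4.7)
The plan is to package together the preparatory material that has already been assembled just before the statement and then invoke the cited theorem from \cite{bomouni1}. The substitution $S_{n}(b) := T_{n}(b-1)$ has been introduced precisely so that proving $T_{n}$ unimodal is equivalent to proving that $S_{n}(b+1)$ is unimodal, which is exactly the form the cited theorem requires.

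First I would verify the two hypotheses of the cited theorem for $S_{n}$: positivity and monotonicity of its coefficients $c_{k,n}$. Positivity is immediate from the recurrence
\begin{equation}
c_{k,n} = (n-2) c_{k-1,n-1} + (n-3)!,
\end{equation}
together with $c_{0,n} = (n-3)! > 0$, by a trivial induction on $n$ (with the base case $n = 3$ handled separately, where $S_{3}$ is a nonzero constant). The monotonicity statement $c_{k+1,n} \geq c_{k,n}$ has essentially been derived already from the identity
\begin{equation}
c_{k+1,n} - c_{k,n} = (n-2)\bigl[ c_{k,n-1} - c_{k-1,n-1} \bigr];
\end{equation}
one only needs a clean induction on $n$, with base case $n = 4$ (where $S_{4}$ has degree $1$ and one checks $c_{1,4} \geq c_{0,4}$ by direct computation from $c_{0,n} = (n-3)!$ and the recurrence).

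With both hypotheses in hand, the cited theorem applies to $S_{n}$, giving that $S_{n}(b+1)$ is unimodal. Since $S_{n}(b+1) = T_{n}((b+1) - 1) = T_{n}(b)$, this yields the unimodality of $T_{n}$ as desired.

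I do not expect any serious obstacle here: the recurrences have all been set up in the preceding lemma and the remarks leading into the theorem, and the substance of the argument is contained in the monotonicity identity above. The only mild care needed is to handle small $n$ (where $T_{n}$ has degree $0$ or $1$ and unimodality is trivial) so that the induction used to prove $c_{k,n}$ nondecreasing has a legitimate base. Once that is done, the theorem is essentially a one-line application of \cite{bomouni1}.
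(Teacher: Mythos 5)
Your proposal is correct and follows exactly the paper's own route: verify that the coefficients $c_{k,n}$ of $S_{n}(b) = T_{n}(b-1)$ are positive and nondecreasing via the stated recurrence and difference identity, then apply the unimodality criterion from the cited reference to conclude that $T_{n}(b) = S_{n}(b+1)$ is unimodal. Your extra attention to the base cases for small $n$ is a welcome refinement but does not change the argument.
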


\medskip

\noindent
{\bf Conclusions}. We have given explicit formulas for integrals of the 
form
\begin{equation}
\int_{a}^{b} R(x) \, \ln x \, dx,
\end{equation}
\noindent
where $R$ is a rational function with real poles. Future reports will 
describe the case of higher powers
\begin{equation}
\int_{a}^{b} R(x) \, \ln^{m} x \, dx, 
\end{equation}
\noindent
as well as the case of complex poles,
based on integrals of the form
\begin{equation}
C_{n}(a,r) := \int_{0}^{b} \frac{\ln x \, dx}{(x^{2}+r^{2})^{n}}.
\end{equation}

\bigskip

\noindent
{\bf Acknowledgments}. The author wishes to thank Luis Medina for a careful
reading of the manuscript. The partial support of 
$\text{NSF-DMS } 0409968$ is also acknowledged. 

\bigskip


\begin{thebibliography}{99}
\bibitem{bomouni1}
G. Boros and V. Moll: {\em A criterion for unimodality}. Elec. Journal of 
Combinatorics, {\bf 6}, 1999, 1-6. 

\bibitem{brenti1}
F. Brenti: {\em Log-concave and unimodal sequences in Algebra, Combinatorics
and Geometry: an update}. Contemporary Mathematics, {\bf 178}, 1994, 71-89.

\bibitem{gr}
I. S. Gradshteyn and I. M. Rhyzik: {\em Table of Integrals, Series, and 
Products}, 6th edition, 2000. Edited by A. Jeffrey and D. Zwillinger. 
Academic Press, New York. 

\bibitem{lewin1}
L. Lewin: {\em Dilogarithms and Associated Functions}, 2nd edition, 1981. 
Elsevier, North Holland, Amsterdam, The Netherlands. 

\bibitem{moll-gr1}
V. Moll: {\em The integrals in Grdashteyn and Rhyzik. Part 1: a family of 
logarithmic integrals}. Scientia, {\bf 14}, 2007, 1-6.

\bibitem{stanley1}
R. Stanley: {\em Log-concave and unimodal sequences in Algebra, Combinatorics
and Geometry}. Graph Theory and its applications: East and West Jinan, 1986. 
Annals of the New York Academy of Sciences, {\bf 576}, 1989, 500-535. 

\end{thebibliography}
\end{document}